\newtheorem{theorem}{Theorem}
\newtheorem{lemma}{Lemma}
\newtheorem{proposition}{Proposition}
\theoremstyle{definition}
\newtheorem{definition}{Definition}
\newtheorem{assumption}{Assumption}
\newtheorem{remark}{Remark}
\newtheorem{problem}{Problem}
\newcommand{\argmin}{\operatornamewithlimits{arg\,min}}
\newcommand{\argmax}{\operatornamewithlimits{arg\,max}}
\newcommand{\subjectto}{\operatorname{subject~to}}
\newcommand{\R}{\mathbb{R}}
\newcommand{\Z}{\mathbb{Z}}
\newcommand{\N}{\mathbb{N}}
\newcommand{\mc}[1]{\mathcal{#1}}
\newcommand{\T}{^\top}
\DeclarePairedDelimiter{\curly}{\{}{\}}
\DeclarePairedDelimiter{\norm}{\lVert}{\rVert}
\definecolor{pastelMagenta}{HTML}{FF48CF}
\definecolor{pastelPurple}{HTML}{8770FE}
\definecolor{pastelBlue}{HTML}{1BA1EA}
\definecolor{pastelSeaGreen}{HTML}{14B57F}
\definecolor{pastelGreen}{HTML}{3EAA0D}
\definecolor{pastelOrange}{HTML}{C38D09}
\definecolor{pastelRed}{HTML}{F5615C}
\definecolor{myBlue1}{RGB}{49, 114, 174}
\definecolor{myRed1}{RGB}{224, 107, 97}
\definecolor{myGreen1}{RGB}{68, 156, 118}
\definecolor{myPurple1}{RGB}{117, 112, 173}
\definecolor{myYellow1}{RGB}{221, 162, 66}
\definecolor{myMagenta1}{RGB}{202, 98, 159}
\definecolor{myCyan1}{RGB}{114, 179, 224}
\definecolor{myBlue2}{cmyk}{0.8336, 0.5245, 0.0745, 0}
\definecolor{myRed2}{cmyk}{0.0808, 0.7143, 0.5968, 0.0028}
\definecolor{myGreen2}{cmyk}{0.7402, 0.1786, 0.0667, 0.0021}
\definecolor{myPurple2}{cmyk}{0.6079, 0.5931, 0.0401, 0}
\definecolor{myYellow2}{cmyk}{0.1289, 0.3792, 0.8644, 0.0014}
\definecolor{myMagenta2}{cmyk}{0.1821, 0.7493, 0.0432, 0}
\definecolor{myCyan2}{cmyk}{0.5226, 0.1608, 0.0075, 0}
\definecolor{juliaBlue}{rgb}{0.255, 0.388, 0.847}
\definecolor{juliaRed}{rgb}{0.796, 0.235, 0.2}
\definecolor{juliaGreen}{rgb}{0.22, 0.596, 0.149}
\definecolor{juliaPurple}{rgb}{0.706, 0.322, 0.804}
\pgfplotsset{compat=newest}
\pgfplotsset{every axis legend/.append style={legend cell align=left, font=\small, draw=none, fill=none}}
\pgfplotsset{every axis/.append style={axis background/.style={fill=white}}}
\pgfplotsset{every axis plot post/.append style={
	smooth,
	line width=2pt,
	line join = round,
	mark = none,
	}
}
\pgfplotsset{
tick label style={font=\footnotesize},
label style={font=\normalsize},
legend style={font=\footnotesize, line width=1pt},
}
\pgfplotsset{every axis/.append style={
thick,
tick style={semithick}}}
\pgfplotsset{
tick label style={font=\footnotesize},
label style={font=\normalsize},
legend style={font=\footnotesize, line width=1pt},
}
\pgfplotsset{every axis/.append style={
thick,
tick style={semithick}}}
\title{\bf{Robust Control Barrier Functions for Nonlinear Control Systems with Uncertainty: A Duality-based Approach}}
\author{Max H. Cohen, Calin Belta, and Roberto Tron%
\thanks{The authors are with the Department of Mechanical Engineering, Boston University, 110 Cummington Mall, Boston, MA 02215 $\mathtt{\curly{maxcohen,cbelta,tron}@bu.edu}$. M.~Cohen is supported by the NSF GRFP under grant DGE-1840990. C. Belta is supported by the NSF under grant IIS-2024606. R.~Tron is supported by ONR grant N00014-19-1-2571.}
}
\date{}
\begin{document}
\maketitle

\begin{abstract}
    This paper studies the design of controllers that guarantee stability and safety of nonlinear control affine systems with parametric uncertainty in both the drift and control vector fields. To this end, we introduce novel classes of robust control barrier functions (RCBF) and robust control Lyapunov functions (RCLF) that facilitate the synthesis of safety-critical controllers in the presence of parametric uncertainty using quadratic programming. Since the initial bounds on the system uncertainty may be highly conservative, we present a data-driven approach to reducing such bounds using input-output data collected online. In particular, we leverage an integral set-membership identification algorithm that iteratively shrinks the set of possible system parameters online and guarantees stability and safety during learning. The efficacy of the developed approach is illustrated on two numerical examples.
\end{abstract}

\section{Introduction}
Two fundamental concepts in modern nonlinear control theory are (asymptotic) stabilization and safety: requiring a closed-loop system to eventually reach a desired state and requiring a closed-loop system to never do anything ``bad,'' respectively. The former property can often be enforced by constructing a suitable \emph{control Lyapunov function} (CLF)~--~a Lyapunov function candidate whose derivative can be made negative at each state by appropriate control action \cite{SontagSCL89,AmesTAC14}. When the property of safety is formalized using set-theoretic notions (i.e., a system is considered safe if its closed-loop trajectories remain within some prescribed safe set at all times), similar Lyapunov-based techniques can be transposed to design controllers enforcing safety of the closed-loop system. In particular, the concept of a \emph{control barrier function} (CBF) plays a role dual to that of CLFs for safety, allowing one to synthesize control inputs at each state that ensure the desired safe set is forward invariant \cite{AmesTAC17,AmesECC19}. When the underlying system is control affine, CLFs and CBFs facilitate the computation of inputs guaranteeing stability and safety using quadratic programming, which has allowed for safe and stable control of complex nonlinear systems such as autonomous vehicles, bipedal robots, and multi-agent systems (see \cite{AmesECC19} for a survey of applications).

One limitation of traditional quadratic program (QP)-based CLF/CBF controllers is their strong reliance on an accurate system model. Since the mathematical models used for control design are generally a simplification of the true underlying system dynamics, it is essential that any controller take into account model uncertainties stemming from external disturbances, unknown parameters, and other unmodeled dynamics. This paradigm has been well noted in the literature and as a result there are many works that take robust \cite{JankovicAutomatica18,FanCoRL21,TomlinCDC21,GargACC21,JankovicLCSS22,SreenathTAC21}, adaptive \cite{TaylorACC20,LopezLCSS21,CohenACC22}, or data-driven \cite{TaylorL4DC20,TaylorCDC21,SreenathCDC21,EgerstedtTRO21,DhimanTAC21} approaches to accounting for uncertainty in the mathematical models used to generate safe and stable controllers.

Although many approaches have been successfully developed for handling uncertainty in the system drift dynamics or uncertainty stemming from additive disturbances \cite{JankovicAutomatica18,TomlinCDC21,GargACC21,TaylorACC20,LopezLCSS21,CohenACC22}, it is often more challenging to extend such approaches to systems with actuation uncertainty. The main technical challenge in such an extension stems from the difficulty in developing linear constraints on the control input whose satisfaction is sufficient for stability/safety. A popular approach to overcoming this challenge is to derive conic constraints on the control input whose satisfaction is sufficient for safety, which can then be embedded in a \emph{second order cone program} (SOCP) \cite{TaylorCDC21,SreenathCDC21,DhimanTAC21}. SOCPs are convex but generally more computationally intensive to solve than a QP. Other approaches avoid the construction of a SOCP by using the vertex representation a convex set containing the uncertainty as constraints in a QP \cite{FanCoRL21,EgerstedtTRO21}. Although this approach leads to control synthesis using a QP, the number of constraints can grow rapidly in higher dimensions. For example, the vertex representation of an $n$-dimensional hyperrectangle results in $2^n$ constraints, whereas the halfspace representation only results in $2n$ constraints.

In this paper we develop a QP framework for robust stabilization and safety of nonlinear control affine systems with parametric uncertainty in both the system drift and control directions. The key to our approach is to leverage the dual of an auxiliary linear program (LP) to convert bilinear constraints on the control input and uncertain parameters that arise from accounting for the worst-case model uncertainty into linear constraints whose satisfaction is sufficient for stability/safety. These linear constraints essentially allow us to use a halfspace representation of a given (polytopic) uncertainty set, rather than a vertex representation such as in \cite{FanCoRL21,EgerstedtTRO21}, which scales more favorably to higher dimensions. Since the initial bounds on the system uncertainty may be highly conservative, we leverage a data-driven approach in which input-output data collected \emph{online} is used to reduce the bounds on the system uncertainty during run-time while maintaining stability/safety guarantees. This reduction in uncertainty is accomplished using a novel \emph{integral} set-membership identification (SMID) algorithm for continuous-time systems 
that does not require knowledge of the state derivative.
Although nonparametric approaches \cite{TaylorL4DC20,TaylorCDC21,SreenathCDC21,EgerstedtTRO21,DhimanTAC21} account for more general classes of uncertainty, we argue that our parametric approach is more practical --- the \emph{structure} of the dynamics for many relevant systems, such as those in robotics, are well known but often possess uncertainty in parameters such as inertia, friction, damping, etc.

\textit{Contributions}:
The contributions of this paper are threefold. First, we present a novel duality-based approach to develop affine constraints on the control input of systems with parametric uncertainty in both the drift and control vector fields; such constraints can be embedded in a QP to compute robust safe and stabilizing controllers. Second, we outline a novel integral SMID algorithm that learns the true set of possible system parameters using data collected online, and reduces the level of uncertainty in the system model at run-time. Third, we present numerical examples comparing the performance of the developed method with and without the SMID algorithm active and illustrate how our approach can be extended to systems with higher relative degree.

\textit{Organization}:
This paper is organized as follows. Sec. \ref{sec:prelim} contains preliminaries on CBFs and CLFs. Sec. \ref{sec:problem} presents our problem formulation and our assumptions on the system uncertainty. Sec. \ref{sec:robust} presents our duality-based approach to robust stabilization and safety. Sec. \ref{sec:smid} outlines our SMID algorithm for uncertainty reduction. Sec. \ref{sec:sims} contains numerical examples. The paper ends with concluding remarks and directions for future research in Sec. \ref{sec:conclusion}.

\textit{Notation}:
For a continuously differentiable function $h\,:\,\R^n\rightarrow\R$ and a vector field $f\,:\,\R^n\rightarrow\R^n$ we use $L_fh(x)$ to denote the \emph{Lie derivative} of $h$ along $f$. The operator $\norm{\cdot}$ denotes the 2-norm. A continuous function $\alpha\,:\,\R\rightarrow\R$ is an \emph{extended class} $\mathcal{K}$ function if $\alpha(0)=0$ and $\alpha$ is strictly increasing. For $x\in\R^n$, $\text{diag}(x)\in\R^{n\times n}$ returns a diagonal matrix whose elements are the components of $x$. The notation $\mathbf{1}_n\coloneqq[1\cdots1]\T\in\R^n$ stands for a vector of ones.

\section{Preliminaries}\label{sec:prelim}
Consider a nonlinear control affine system of the form
\begin{equation}\label{eq:dyn}
    \dot{x}=f(x) + g(x)u,
\end{equation}
where $x\in\R^n$ is the system state, $u\in\mathcal{U}\subseteq\R^m$ is the control input, $f\,:\,\R^n\rightarrow\R^n$
is a locally Lipschitz vector field modeling the system drift, and $g\,:\,\R^n\rightarrow\R^{n\times m}$ is a locally Lipschitz matrix whose columns capture the control directions. Given a locally Lipschitz feedback control policy $k\,:\,\R^n\rightarrow\R^m$, let $x\,:\,\mathcal{I}\rightarrow\R^n$ be the resulting solution of the closed-loop system \eqref{eq:dyn} under the control signal $u(t)=k(x(t))$ defined on some maximal interval of existence $\mathcal{I}\subseteq\R_{\geq0}$. We say that a closed set $\mathcal{C}\subset\R^n$ is \emph{forward invariant} for the closed-loop system \eqref{eq:dyn} if any solution $x(\cdot)$ starting in $\mathcal{C}$ satisfies $x(t)\in\mathcal{C}$ for all $t\in\mathcal{I}$. As set invariance is often synonymous with \emph{safety}, we refer to sets $\mathcal{C}$ that are forward invariant for \eqref{eq:dyn} as \emph{safe sets}. In this paper, we consider safe sets characterized as the zero superlevel set of a continuously differentiable function $h\,:\,\R^n\rightarrow\R$ as
\begin{equation}\label{eq:C}
    \mathcal{C}\coloneqq\{x\in\R^n\,|\,h(x)\geq0\}.
\end{equation}
The concept of a \emph{control barrier function} (CBF) \cite{AmesTAC17} provides a constructive tool for the design of controllers that render sets of the form \eqref{eq:C} forward invariant and thus safe.
\begin{definition}[\cite{AmesTAC17}]\label{def:CBF}
    A continuously differentiable function $h\,:\,\R^n\rightarrow\R$ is said to be a \emph{control barrier function} (CBF) for \eqref{eq:dyn} on a set $\mathcal{C}$ as in \eqref{eq:C} if there exists an extended class $\mathcal{K}$ function $\alpha$ such that for all $x\in\mathcal{C}$
    \begin{equation}\label{eq:CBF}
        \sup_{u\in\mathcal{U}}\curly{L_fh(x) + L_gh(x)u}\geq -\alpha(h(x)).
    \end{equation}
\end{definition}
Importantly, a CBF $h$ induces a set-valued map $K_{\mathrm{cbf}}(x)$ that associates to each $x\in\mathcal{C}$ a set $K_{\mathrm{cbf}}(x)\subseteq\mathcal{U}$ of control values satisfying the condition from \eqref{eq:CBF} as
\begin{equation}\label{eq:U_cbf}
    K_{\mathrm{cbf}}(x)\coloneqq\curly{u\in\mathcal{U}\,|\,L_fh(x) + L_gh(x)u\geq - \alpha(h(x))}.
\end{equation}
For each $x\in\mathcal{C}$, \eqref{eq:U_cbf} captures affine constraints on the control input, allowing control values satisfying the CBF condition \eqref{eq:CBF} to be computed by solving the quadratic program (QP)

\begin{equation*}\label{eq:CBF-QP}
    \begin{aligned}
        \min_{u\in\mathcal{U}} & \quad \tfrac{1}{2}\|u-k_d(x)\|^2 \\
        \subjectto & \quad L_fh(x) + L_gh(x)u \geq - \alpha(h(x)),
    \end{aligned}
\end{equation*}
where $k_d\,:\,\R^n\rightarrow\R^m$ is any locally Lipschitz nominal control policy. The main result with regards to CBFs is that applying any locally Lipschitz control policy contained in $K_{\mathrm{cbf}}(x)$ to \eqref{eq:dyn} renders $\mathcal{C}$ forward invariant for the closed-loop system \cite{AmesTAC17}. The proofs of many CBF-related results are facilitated by the following comparison lemma.

\begin{lemma}[\cite{EgerstedtLCSS17}]\label{lemma:h}
    Consider a locally Lipschitz extended class $\mathcal{K}$ function $\alpha$ and let $\dot{h}\,:\,[t_1,t_2]\rightarrow \R$ be absolutely continuous. Provided $h(x(t_1))\geq0$ and $\dot{h}(x(t))\geq-\alpha(h(x(t)))$ for almost all $t\in[t_1,t_2]$, then $h(x(t))\geq0$ for all $t\in[t_1,t_2]$.
\end{lemma}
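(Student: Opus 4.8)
The plan is to argue by contradiction, using only the sign structure of $\alpha$ together with the fundamental theorem of calculus for absolutely continuous functions. Write $w(t)\coloneqq h(x(t))$; by the hypothesis $w$ is absolutely continuous on $[t_1,t_2]$ with $w(t_1)\geq 0$ and $\dot w(t)\geq-\alpha(w(t))$ for almost every $t\in[t_1,t_2]$. Suppose, toward a contradiction, that $w(t')<0$ for some $t'\in(t_1,t_2]$.

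First I would isolate the last instant at which $w$ is nonnegative before $t'$: set $t^\star\coloneqq\sup\{t\in[t_1,t']\,:\,w(t)\geq 0\}$. This set is nonempty (it contains $t_1$) and closed by continuity of $w$, so the supremum is attained, $w(t^\star)\geq 0$, and $t^\star<t'$ because $w(t')<0$. By definition of the supremum, $w(t)<0$ for every $t\in(t^\star,t']$, and continuity of $w$ then forces $w(t^\star)=\lim_{t\to t^{\star+}}w(t)\leq 0$; hence in all cases $w(t^\star)=0$.

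The key step is then a sign argument on $(t^\star,t']$. Since $\alpha$ is a strictly increasing extended class $\mathcal{K}$ function with $\alpha(0)=0$, we have $\alpha(w(s))<0$ whenever $w(s)<0$, so $-\alpha(w(s))>0$ there, and the differential inequality gives $\dot w(s)\geq-\alpha(w(s))>0$ for almost every $s\in(t^\star,t']$. Integrating and using absolute continuity of $w$, $w(t')=w(t^\star)+\int_{t^\star}^{t'}\dot w(s)\,ds\geq w(t^\star)=0$, contradicting $w(t')<0$. Hence $h(x(t))=w(t)\geq 0$ for all $t\in[t_1,t_2]$.

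I expect the only delicate point to be the bookkeeping around $t^\star$ — specifically, justifying via continuity that $w(t^\star)=0$ (not merely $\geq 0$) and that $w$ remains strictly negative on the entire interval $(t^\star,t']$, which is exactly what lets the sign argument be applied uniformly. As an alternative that puts the local Lipschitz hypothesis on $\alpha$ to use, one can route through the standard comparison lemma (cf.\ \cite{EgerstedtLCSS17}): let $y(\cdot)$ solve $\dot y=-\alpha(y)$ with $y(t_1)=w(t_1)\geq 0$; local Lipschitzness of $\alpha$ gives uniqueness, and since $\alpha(0)=0$ the constant $0$ is a solution, so $y$ cannot cross zero and stays in $[0,y(t_1)]$, hence exists on all of $[t_1,t_2]$ with $y\geq 0$; the comparison lemma then yields $w(t)\geq y(t)\geq 0$.
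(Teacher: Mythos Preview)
The paper does not prove this lemma at all; it is stated with a citation to \cite{EgerstedtLCSS17} and used as a black box comparison result, so there is no paper proof to compare against. Your contradiction argument is correct and self-contained: the identification of $t^\star$ as the last nonnegative time, the continuity argument forcing $w(t^\star)=0$, and the sign computation $\dot w\geq -\alpha(w)>0$ on $(t^\star,t']$ all go through cleanly for absolutely continuous $w$. Your alternative route through the scalar comparison ODE $\dot y=-\alpha(y)$ is closer in spirit to how such lemmas are typically packaged in the cited literature, and it is the version that actually uses the local Lipschitz hypothesis on $\alpha$; your primary contradiction argument, by contrast, needs only that $\alpha$ is extended class $\mathcal{K}$, which is a mild strengthening worth noting.
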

Typically CBFs are used in conjunction with control Lyapunov functions (CLFs) \cite{SontagSCL89,AmesTAC14} to synthesize control policies guaranteeing stability and safety.
\begin{definition}[\cite{AmesECC19}]
    A continuously differentiable positive definite function $V\,:\,\R^n\rightarrow\R_{\geq0}$ is said to be a \emph{control Lyapunov function} (CLF) for \eqref{eq:dyn} on a set $\mathcal{D}$ if there exists a class $\mathcal{K}$ function $\gamma$ such that for all $x\in\mathcal{D}$
    \begin{equation}\label{eq:CLF}
        \inf_{u\in\mathcal{U}}\{L_fV(x) + L_gV(x)u\} \leq -\gamma(V(x)).
    \end{equation}
\end{definition}
Similar to CBFs, the condition in \eqref{eq:CLF} constitutes an affine constraint on the control input, allowing stabilizing control inputs to be computed at any $x\in\mathcal{D}$ by solving a QP \cite{AmesTAC14,AmesTAC17,AmesECC19}.

\section{Problem Formulation}\label{sec:problem}

The primary objective of this paper is to develop CBF-based control policies for systems of the form \eqref{eq:dyn} with \emph{parametric uncertainty} in the vector fields $f,g$. To this end, we consider the following uncertain nonlinear system
\begin{equation}\label{eq:uncertain_dyn}
    \dot{x} = f(x) + g(x)u + \Delta f(x) + \Delta g(x) u,
\end{equation}
where $f,g$ capture known nominal components of the dynamics and $\Delta f\,:\,\R^n\rightarrow\R^n$, $\Delta g\,:\,\R^n\rightarrow\R^{n\times m}$ represent unknown dynamics, assumed to satisfy the following:

\begin{assumption}\label{assumption:lip}
    The uncertain dynamics $\Delta f,\,\Delta g$ can be decomposed as
    \begin{equation*}
        \begin{aligned}
        \Delta f(x)= & \sum_{i=1}^p \Delta f_i(x)\theta_{f_i}=F(x)\theta_f, \\
        \Delta g(x)u= & \sum_{i=1}^m \Delta g_i(x)\theta_{g_i}u_i = G(x)\text{diag}(u)\theta_g
        \end{aligned}
    \end{equation*}
    where $F(x)\coloneqq[\Delta f_1(x)\,\cdots\, \Delta f_p(x)]\in\R^{n\times p}$ and $G(x)\coloneqq[\Delta g_1(x)\,\cdots\,\Delta g_m(x)]\in\R^{n\times m}$ are \emph{known} matrix-valued functions and $\theta_f\coloneqq [\theta_{f_1}\,\cdots\,\theta_{f_p}]\T\in\R^p$, $\theta_g\coloneqq[\theta_{g_1}\,\cdots\,\theta_{g_m}]\T\in\R^m$ are vectors of \emph{unknown} parameters.
\end{assumption}
The above assumption implies that \eqref{eq:uncertain_dyn} can be seen as affine in the parameters, i.e.,
\begin{equation}\label{eq:dyn_lip}
    \dot{x}=f(x) + g(x)u + \varphi(x,u)\theta,
\end{equation}
where $\varphi(x,u)\coloneqq[F(x)\:G(x)\text{diag}(u)]\in\R^{n\times(p+m)}$ is a composite regression matrix and $\theta\coloneqq[\theta_f\T\:\theta_g\T]\T\in\R^{p+m}$ is a composite vector of uncertain parameters. Note that we assume there is a single uncertain parameter associated with each control direction, which holds for many practical systems. Additionally, we assume the unknown parameters belong to a known hyperrectangle $\Theta\subset\R^{p+m}$:

\begin{assumption}\label{assumption:theta}
    There exist known constants $\underline{\theta}_i,\,\overline{\theta}_i\in\R$ for all $i\in\{1,\dots,p+m\}$ and a hyperrectangle $\Theta\coloneqq[\underline{\theta}_1,\overline{\theta}_1]\times\cdots\times[\underline{\theta}_{p+m},\overline{\theta}_{p+m}]\subset\R^{p+m}$ such that $\theta\in\Theta$.
\end{assumption}
Assumption \ref{assumption:theta} implies the set of possible parameters $\Theta$ admits a halfspace representation as $\Theta=\{\theta\in\R^{p+m}\,|\,A\theta \leq b\}$, where $A,b$ capture linear halfspace constraints. Importantly, the above assumptions are satisfied by a variety of practical systems. Examples include robotic systems described by the standard manipulator equations with known bounds on parameters associated with inertia, damping, friction, etc. 
We are now ready to formally state the main problem considered in this paper.
\begin{problem}\label{prob:main}
Given the uncertain system \eqref{eq:dyn_lip} satisfying Assumptions \ref{assumption:lip}-\ref{assumption:theta} and a safe set $\mathcal{C}\subset\R^n$ as in \eqref{eq:C}, find a control policy $u=k(x)$ that guarantees the forward invariance of $\mathcal{C}$ and/or asymptotic stability of the origin.
\end{problem}

\section{A Duality-based Approach to Robust Stability and Safety}\label{sec:robust}
\subsection{Robust Control Barrier Functions}

In this section, we develop a CBF approach that robustly accounts for all possible realizations of the system uncertainty to address Problem \ref{prob:main}. Importantly, we show how this can be accomplished while retaining the traditional QP structure used in CBF approaches by exploiting the dual of a particular linear program (LP). We begin by introducing the notion of a robust CBF for systems of the form \eqref{eq:dyn_lip}.

\begin{definition}\label{def:RCBF}
    A continuously differentiable function $h\,:\,\R^n\rightarrow\R$ is said to be a robust CBF (RCBF) for \eqref{eq:dyn_lip} on a set $\mathcal{C}\subset\R^n$ as in \eqref{eq:C} if there exists an extended class $\mathcal{K}$ function $\alpha$ such that for all $x\in\mathcal{C}$
    \begin{equation}\label{eq:RCBF}
        \sup_{u\in\mathcal{U}}\inf_{\theta\in\Theta} \dot{h}(x,u,\theta) \geq -\alpha(h(x)),
    \end{equation}
    where $\dot{h}(x,u,\theta)=L_fh(x) + L_gh(x)u + L_{\varphi}h(x,u)\theta$.
\end{definition}
Similar to the standard CBF case, let
\begin{equation*}
    \begin{aligned}
        K_{rcbf}(x)\coloneqq & \{u\in\mathcal{U}\,|\,L_fh(x) + L_gh(x)u  \\
        & + \inf_{\theta\in\Theta}L_{\varphi}h(x,u)\theta \geq -\alpha(h(x)) \}
    \end{aligned}
\end{equation*}
be, for each $x\in\mathcal{C}$, the set of control values satisfying the condition from \eqref{eq:RCBF}. The following lemma shows that any locally Lipschitz control policy $k(x)\in K_{rcbf}(x)$ renders $\mathcal{C}$ forward invariant for the closed-loop system.
%\rtron{We need to assume that $K_{rclf}$ is not empty.}
\begin{lemma}\label{lemma:RCBF}
    If $h$ is a RCBF for \eqref{eq:dyn_lip} over a set $\mathcal{C}$ as in \eqref{eq:C}, $K_{\text{rcbf}}(x)$ is nonempty for each $x\in\mathcal{C}$, and Assumptions \ref{assumption:lip}-\ref{assumption:theta} hold, then any locally Lipschitz control policy $u=k(x)$ satisfying $k(x)\in K_{rcbf}(x)$ for each $x\in\mathcal{C}$ renders $\mathcal{C}$ forward invariant for the closed-loop system.
\end{lemma}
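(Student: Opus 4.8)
The plan is to reduce the robust forward-invariance claim to the comparison lemma (Lemma~\ref{lemma:h}), which is the standard route for CBF arguments. Let $x(\cdot)$ be a solution of the closed-loop system \eqref{eq:dyn_lip} under $u=k(x)$, starting in $\mathcal{C}$, defined on its maximal interval of existence $\mathcal{I}$. Along this trajectory the true dynamics are driven by some fixed unknown parameter $\theta^\star\in\Theta$, so for almost all $t\in\mathcal{I}$ the chain rule gives $\dot h(x(t)) = L_fh(x(t)) + L_gh(x(t))k(x(t)) + L_\varphi h(x(t),k(x(t)))\theta^\star$. Since $\theta^\star\in\Theta$, we may bound this from below by the worst case: $\dot h(x(t)) \geq L_fh(x(t)) + L_gh(x(t))k(x(t)) + \inf_{\theta\in\Theta} L_\varphi h(x(t),k(x(t)))\theta$.

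Next I would invoke the hypothesis $k(x)\in K_{\mathrm{rcbf}}(x)$ for every $x\in\mathcal{C}$: by the definition of $K_{\mathrm{rcbf}}$, the right-hand side of the displayed inequality is at least $-\alpha(h(x(t)))$, so $\dot h(x(t)) \geq -\alpha(h(x(t)))$ for almost all $t$. To apply Lemma~\ref{lemma:h} I need $t\mapsto \dot h(x(t))$ (equivalently $t\mapsto h(x(t))$ is $C^1$ with absolutely continuous derivative — more simply, it suffices that $h(x(t))$ be absolutely continuous and satisfy the differential inequality a.e.). This follows because $f,g,\varphi$ are locally Lipschitz, $k$ is locally Lipschitz, and $h$ is $C^1$, so $x(\cdot)$ is absolutely continuous (indeed $C^1$) and hence $t\mapsto h(x(t))$ is absolutely continuous on any compact subinterval. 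Combined with $h(x(0))\geq 0$, Lemma~\ref{lemma:h} yields $h(x(t))\geq 0$, i.e. $x(t)\in\mathcal{C}$, for all $t$ in any compact subinterval of $\mathcal{I}$, hence for all $t\in\mathcal{I}$. This is precisely forward invariance of $\mathcal{C}$.

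The one genuine subtlety — and the step I expect to require the most care — is existence and regularity of the closed-loop solution, together with the fact that the differential inequality only needs to hold while $x(t)\in\mathcal{C}$, whereas the RCBF property and the membership $k(x)\in K_{\mathrm{rcbf}}(x)$ are only assumed on $\mathcal{C}$. The clean way to handle this is a standard viability/boundary argument: suppose for contradiction that $x(t_0)\notin\mathcal{C}$ for some $t_0\in\mathcal{I}$; let $t_1 \coloneqq \sup\{t\le t_0 : h(x(t))\ge 0\}$, so $h(x(t_1))=0$ by continuity and $x(t)\in\mathcal{C}$ on $[0,t_1]$; apply the argument of the previous paragraph on $[0,t_1]$ to get $\dot h(x(t))\ge -\alpha(h(x(t)))$ a.e.\ there, hence $h(x(t_1))\ge 0$ is consistent but, more importantly, the same inequality extends slightly past $t_1$ by continuity of $x(\cdot)$ if $\mathcal{U}$ and the data are such that $k$ stays admissible — the cleanest version simply notes that $\dot h \ge -\alpha(h)$ a.e.\ on $[0,t_1]$ with $h(x(0))\ge 0$ forces $h(x(t_1))\ge 0$ and then Lemma~\ref{lemma:h} applied on $[0, t_0]$ (using that $\dot h(x(t))\ge -\alpha(h(x(t)))$ holds a.e.\ on the set where $x(t)\in\mathcal{C}$, which by the definition of $t_1$ and continuity is a full-measure neighborhood structure) contradicts $h(x(t_0))<0$. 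In the interest of brevity one typically states this as: the set $\mathcal{C}$ being closed and the differential inequality holding on $\mathcal{C}$, Nagumo-type/comparison reasoning via Lemma~\ref{lemma:h} gives invariance; I would present the contradiction argument compactly rather than grinding through the measure-theoretic bookkeeping.
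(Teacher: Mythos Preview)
Your proposal is correct and follows essentially the same route as the paper: compute $\dot h$ along the closed-loop trajectory, lower-bound the uncertain term by $\inf_{\theta\in\Theta} L_\varphi h(x,k(x))\theta$, invoke $k(x)\in K_{\mathrm{rcbf}}(x)$ to obtain $\dot h \geq -\alpha(h)$, and conclude via Lemma~\ref{lemma:h}. The paper's proof is exactly this three-line chain of inequalities followed by an appeal to Lemma~\ref{lemma:h}; your additional discussion of absolute continuity and the viability/boundary subtlety (that $k(x)\in K_{\mathrm{rcbf}}(x)$ is only assumed on $\mathcal{C}$) is more careful than the paper, which simply asserts the differential inequality for all $t\in\mathcal{I}$ without addressing that point.
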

\begin{proof}
    The derivative of $h$ along the closed-loop system is lower bounded as
    \begin{equation*}
        \begin{aligned}
            \dot{h}(x) &= L_fh(x) + L_gh(x)k(x) + L_{\varphi}h(x,k(x))\theta \\
            & \geq L_fh(x) + L_gh(x)k(x) + \inf_{\theta\in\Theta}L_{\varphi}h(x,k(x))\theta \\
            & \geq -\alpha(h(x)).
        \end{aligned}
    \end{equation*}
    Hence, for all $t\in\mathcal{I}$ along the closed-loop system trajectory $x\,:\,\mathcal{I}\rightarrow\R^n$ we have $\dot{h}(x(t))\geq -\alpha(h(x(t)))$ and it follows from Lemma \ref{lemma:h} that $\mathcal{C}$ is forward invariant.
\end{proof}
Although the above lemma demonstrates that the class of CBF from Def. \ref{def:RCBF} provides sufficient conditions for safety, this formulation is not appealing from a control synthesis perspective. In particular, the minimax nature and coupling of control and parameters in Def. \ref{def:RCBF} will lead to bilinear constraints on the control and parameters and thus cannot be directly cast as a QP. To remedy this, note that the inner minimization problem from \eqref{eq:RCBF} can be written as the LP\footnote{Note that $L_{\varphi}h(x,u)$ is an affine function of $u$.}:
\begin{equation}\label{eq:LP}
    \begin{aligned}
        \inf_{{\theta}} & \quad L_{\varphi}h(x,u){\theta} \\
        \subjectto & \quad A{\theta}\leq b.
    \end{aligned}
\end{equation}
The dual of \eqref{eq:LP} is
\begin{equation}\label{eq:LP_dual}
    \begin{aligned}
        \sup_{\mu\leq 0} & \quad b\T\mu \\
        \subjectto & \quad \mu\T A =  L_{\varphi}h(x,u),
    \end{aligned}
\end{equation}
where $\mu$ is the dual variable. In light of \eqref{eq:LP} and \eqref{eq:LP_dual} we show in Theorem \ref{theorem:RCBF} that one can solve the following QP
\begin{equation}\label{eq:RCBF-QP}
    \begin{aligned}
        \min_{u\in\mc{U},\,\mu\leq0} & \quad \tfrac{1}{2}\|u - k_d(x)\|^2 \\
        \subjectto & \quad L_fh(x) + L_gh(x)u + b\T\mu\geq - \alpha(h(x)) \\
        & \quad \mu\T A =  L_{\varphi}h(x,u),  \\
    \end{aligned}
\end{equation}
with decision variables $u$ and $\mu$, to compute a controller satisfying the RCBF conditions from Def. \ref{def:RCBF}.

\begin{theorem}\label{theorem:RCBF}
    Let the assumptions of Lemma \ref{lemma:RCBF} hold. Then any locally Lipschitz solution to \eqref{eq:RCBF-QP}, $u=k(x)$, renders $\mathcal{C}$ forward invariant for the closed-loop system.
\end{theorem}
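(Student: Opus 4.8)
The plan is to reduce the statement to membership in the set $K_{rcbf}(x)$ and then invoke Lemma~\ref{lemma:RCBF}. Concretely, I would show that if $(k(x),\mu(x))$ is a feasible (hence a fortiori optimal) point of the QP~\eqref{eq:RCBF-QP} at a state $x\in\mathcal{C}$, then the $u$-component $k(x)$ satisfies $L_fh(x)+L_gh(x)k(x)+\inf_{\theta\in\Theta}L_{\varphi}h(x,k(x))\theta\geq-\alpha(h(x))$, i.e. $k(x)\in K_{rcbf}(x)$. Once this is established for every $x\in\mathcal{C}$, and since $k$ is locally Lipschitz by hypothesis, Lemma~\ref{lemma:RCBF} applies verbatim and yields forward invariance of $\mathcal{C}$.

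The bridge between~\eqref{eq:RCBF-QP} and $K_{rcbf}(x)$ is linear programming duality applied to the pair~\eqref{eq:LP}--\eqref{eq:LP_dual}. Fixing $x\in\mathcal{C}$ and regarding $u$ as data, the primal LP~\eqref{eq:LP} is feasible because $\Theta\neq\emptyset$ by Assumption~\ref{assumption:theta}, and its optimal value is finite (indeed attained) because $\Theta$ is a compact hyperrectangle and $\theta\mapsto L_{\varphi}h(x,u)\theta$ is linear. Hence LP strong duality gives that~\eqref{eq:LP_dual} is feasible, its optimum is attained, and its value equals that of~\eqref{eq:LP}. For the proof itself only \emph{weak} duality is needed: for any $\mu$ that is dual-feasible for the LP associated with $u=k(x)$, i.e. $\mu\leq0$ and $\mu\T A=L_{\varphi}h(x,k(x))$, one has $b\T\mu\leq\inf_{\theta\in\Theta}L_{\varphi}h(x,k(x))\theta$. (Strong duality is, however, worth noting in passing, since it shows~\eqref{eq:RCBF-QP} is feasible exactly when $K_{rcbf}(x)$ is nonempty, which is part of the standing hypothesis.)

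Now take a solution $(k(x),\mu(x))$ of~\eqref{eq:RCBF-QP}. The equality constraint forces $\mu(x)$ to be dual-feasible for the LP at $u=k(x)$, so the weak-duality inequality above gives $b\T\mu(x)\leq\inf_{\theta\in\Theta}L_{\varphi}h(x,k(x))\theta$. Combining this with the inequality constraint of~\eqref{eq:RCBF-QP},
\begin{equation*}
\begin{aligned}
L_fh(x)+L_gh(x)k(x)+\inf_{\theta\in\Theta}L_{\varphi}h(x,k(x))\theta
&\geq L_fh(x)+L_gh(x)k(x)+b\T\mu(x) \\
&\geq -\alpha(h(x)),
\end{aligned}
\end{equation*}
which is precisely $k(x)\in K_{rcbf}(x)$. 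Since $x\in\mathcal{C}$ was arbitrary, Lemma~\ref{lemma:RCBF} finishes the argument.

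The main (and essentially only nontrivial) obstacle is justifying that the LP pair has no duality gap with the appropriate attainment, which I would handle by appealing to the compactness of $\Theta$ from Assumption~\ref{assumption:theta}: this makes the primal feasible with finite, attained value, so that standard LP strong duality applies and, in particular, the weak-duality estimate used in the chain above holds for every dual-feasible $\mu$. Everything else is routine: local Lipschitzness of $k$ is assumed, the reduction to $K_{rcbf}(x)$ is immediate once the duality estimate is in hand, and the invariance conclusion is then inherited directly from Lemma~\ref{lemma:RCBF}.
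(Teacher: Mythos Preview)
Your proof is correct and follows essentially the same approach as the paper: use LP duality to show that any feasible point of~\eqref{eq:RCBF-QP} yields $k(x)\in K_{rcbf}(x)$, then invoke Lemma~\ref{lemma:RCBF}. Your observation that only \emph{weak} duality is needed for the inequality chain (with strong duality reserved for feasibility of the QP) is a slight sharpening of the paper's argument, which invokes strong duality throughout, but the route is the same.
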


\begin{proof}
    The RCBF condition \eqref{eq:RCBF} is satisfied at a state $x\in\mathcal{C}$ if the value of the optimization problem
    \begin{equation}\label{eq:opt1}
        \begin{aligned}
            \sup_{u\in\R^m}\inf_{\theta\in\R^{p+m}}& \quad L_fh(x) + L_gh(x)u + L_{\varphi}h(x,u)\theta \\
            \subjectto & \quad u\in\mathcal{U},\;A\theta \leq b,
        \end{aligned}
    \end{equation}
    is greater than or equal to $-\alpha(h(x))$. It follows from the strong duality theorem of LPs \cite[Thm. 4.4]{BertsimasTsitsiklis} that the values of the primal and dual LPs in \eqref{eq:LP} and \eqref{eq:LP_dual}, respectively, are equal, allowing the inner minimization in \eqref{eq:opt1} to be replaced with its dual \eqref{eq:LP_dual} yielding
     \begin{equation}\label{eq:opt2}
        \begin{aligned}
            \sup_{u\in\R^m,\mu\in\R^{2(p+m)}}& \quad L_fh(x) + L_gh(x)u + b\T \mu \\
            \subjectto & \quad u\in\mathcal{U},\;\mu\T A = L_{\varphi}h(x,u),\;\mu \leq 0.
        \end{aligned}
    \end{equation}
    By the strong duality of LPs, the values of the optimization problems in \eqref{eq:opt1} and \eqref{eq:opt2} are equivalent implying that if the optimal value of \eqref{eq:opt2} is greater than or equal to $-\alpha(h(x))$ for a given $x\in\mathcal{C}$, then the resulting input $u$ satisfies \eqref{eq:RCBF}. Embedding the conditions imposed by \eqref{eq:opt2} as constraints in an optimization problem yields the QP in \eqref{eq:RCBF-QP}. Under the presumption that $K_{rcbf}(x)$ is nonempty for each $x\in\mathcal{C}$, the optimal value of \eqref{eq:opt1}, and thus of \eqref{eq:opt2} by strong duality, is greater than or equal to $-\alpha(h(x))$, which implies that \eqref{eq:RCBF-QP} is feasible for each $x\in\mathcal{C}$ and that $k(x)\in K_{rcbf}(x)$ for each $x\in\mathcal{C}$. It then follows from the assumption that that the resulting control policy $u=k(x)$ is locally Lipschitz and Lemma \ref{lemma:RCBF} that such a policy renders $\mathcal{C}$ forward invariant for the closed-loop system, as desired.
\end{proof}

\begin{remark}\label{remark:hocbf}
    For ease of exposition, all results in this section have been stated for relative degree one CBFs. It is possible to extend our approach to high order CBFs \cite{WeiTAC21-hocbf} provided the uncertain parameters satisfy the assumptions made in \cite{CohenACC22}. An example of such an extension is provided in Sec. \ref{sec:hocbf-sim}.
\end{remark}
\begin{remark}
  An alternative way to replacing \eqref{eq:LP} with \eqref{eq:LP_dual} would be to use the fact that, for an LP, the optimum value is achieved at a vertex of the feasible set. Therefore, it is possible to replace the constraint given by \eqref{eq:LP} with an enumeration of constraints obtained by replacing $\theta$ with each corner of the feasible polyhedron $A\theta\leq b$. In general, however, this would result in a number of constraints that grows combinatorially in the number of half spaces in $A\theta\leq b$. Intuitively, this is avoided in \eqref{eq:LP_dual} because the dual variable $\mu$ automatically selects the worst-case corner.
\end{remark}

\subsection{Robust Control Lyapunov Functions}
The duality-based approach developed for robust safety naturally extends to robust stabilization problems using the notion of a robust CLF for systems of the form \eqref{eq:dyn_lip}. For all results in this section we make the following assumption.
\begin{assumption}\label{assumption:eq_pt}
    The uncertain system \eqref{eq:dyn_lip} satisfies $f(0)=0$, which implies that $\varphi(0,0)=0$ and the origin is an equilibrium point of the unforced system.
\end{assumption}
\begin{definition}\label{def:RCLF}
    A continuously differentiable positive definite function $V\,:\,\R^n\rightarrow\R_{\geq0}$ is said to be a Robust CLF (RCLF) for \eqref{eq:dyn_lip} on a set $\mathcal{D}\subseteq\R^n$ if there exists a class $\mathcal{K}$ function $\gamma$ such that for all $x\in\mathcal{D}$
    \begin{equation}\label{eq:RCLF}
        \inf_{u\in\mathcal{U}}\sup_{\theta\in\Theta}\dot{V}(x,u,\theta)\leq -\gamma(V(x)),
    \end{equation}
    where $\dot{V}(x,u,\theta)=L_fV(x) + L_gV(x)u + L_{\varphi}V(x,u)\theta$.
\end{definition}
Now consider the set
\begin{equation*}
    \begin{aligned}
        K_{rclf}(x)\coloneqq & \{u\in\mathcal{U}\,|\, L_fV(x) + L_gV(x)u  \\ &+ \sup_{\theta\in\Theta}L_{\varphi}V(x,u)\theta \leq - \gamma(V(x))\},
    \end{aligned}
\end{equation*}
of all control values satisfying the condition from \eqref{eq:RCLF}. The following lemma shows that any locally Lipschitz controller satisfying the conditions of Def. \ref{def:RCLF} renders the origin asymptotically stable for \eqref{eq:dyn_lip}.
\begin{lemma}\label{lemma:RCLF}
    If $V$ is a RCLF for \eqref{eq:dyn_lip} on a set $\mathcal{D}$ containing the origin, $K_{rclf}(x)$ is nonempty for each $x\in\mathcal{D}$, and Assumptions \ref{assumption:lip}-\ref{assumption:eq_pt} hold, then any locally Lipschitz control policy $u=k(x)$ satisfying $k(x)\in K_{{rclf}}(x)$ for each $x\in\mathcal{D}$ renders the origin asymptotically stable for \eqref{eq:dyn_lip}.
\end{lemma}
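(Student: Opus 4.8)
The plan is to mirror the proof of Lemma~\ref{lemma:RCBF}, since the RCLF condition is the exact dual analogue of the RCBF condition. First I would fix any locally Lipschitz policy $u=k(x)$ with $k(x)\in K_{rclf}(x)$ for all $x\in\mathcal{D}$, and let $x\,:\,\mathcal{I}\rightarrow\R^n$ be the resulting closed-loop solution. Along this trajectory, the true parameter $\theta\in\Theta$ (by Assumption~\ref{assumption:theta}) satisfies
\begin{equation*}
    \dot{V}(x) = L_fV(x) + L_gV(x)k(x) + L_{\varphi}V(x,k(x))\theta \leq L_fV(x) + L_gV(x)k(x) + \sup_{\theta'\in\Theta}L_{\varphi}V(x,k(x))\theta' \leq -\gamma(V(x)),
\end{equation*}
where the last inequality is exactly the defining property of $K_{rclf}(x)$. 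So the key mechanical step is the same ``worst-case domination'' argument used for the RCBF: the true realization of the uncertainty is no worse than the supremum over $\Theta$.

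Next I would invoke a standard Lyapunov/comparison argument to conclude asymptotic stability from the differential inequality $\dot{V}(x(t))\leq -\gamma(V(x(t)))$. Here I would use Assumption~\ref{assumption:eq_pt}, which guarantees the origin is an equilibrium of the unforced dynamics and, together with $V$ being continuously differentiable positive definite, lets $V$ serve as a genuine Lyapunov function; $\gamma\in\mathcal{K}$ makes $-\gamma(V(x))$ negative definite on $\mathcal{D}\setminus\{0\}$. I would then cite the standard converse-type result (e.g.\ the comparison lemma, or Khalil's Lyapunov stability theorem) that a continuously differentiable positive definite $V$ with $\dot{V}\leq -\gamma(V)<0$ away from the origin implies the origin is asymptotically stable, with $\mathcal{D}$ (or a sublevel set of $V$ contained in $\mathcal{D}$) furnishing an estimate of the region of attraction. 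This parallels how Lemma~\ref{lemma:h} was used to close the safety proof.

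The one point requiring a little care — and the main (mild) obstacle — is that $k$ is only assumed locally Lipschitz, so a priori the closed-loop solution is defined only on a maximal interval $\mathcal{I}$ that could be bounded. I would handle this exactly as in the classical argument: restrict attention to a compact sublevel set $\Omega_c=\{x\in\mathcal{D}\,|\,V(x)\leq c\}$ that is contained in $\mathcal{D}$; since $\dot{V}\leq 0$ on $\mathcal{D}$, $\Omega_c$ is forward invariant, the trajectory stays in a compact set, and hence $\mathcal{I}=[0,\infty)$ by the standard extension theorem for ODEs. With forward completeness on $\Omega_c$ in hand, $\gamma\in\mathcal{K}$ and positive definiteness of $V$ give $V(x(t))\to 0$ and therefore $x(t)\to 0$, establishing asymptotic stability. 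I would keep this part brief, noting it is entirely routine once the domination inequality above is established, and that the substantive content of the lemma — converting the inner $\sup_{\theta\in\Theta}$ into something usable — is precisely what the RCLF definition already encodes, just as in Lemma~\ref{lemma:RCBF}.
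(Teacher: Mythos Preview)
Your proposal is correct and follows essentially the same route as the paper: establish the domination inequality $\dot{V}(x)\leq -\gamma(V(x))$ by bounding the true parameter contribution by the supremum over $\Theta$, then invoke a standard Lyapunov theorem (the paper cites \cite[Thm.~4.1]{Khalil}) to conclude asymptotic stability. Your additional discussion of forward completeness via compact sublevel sets is sound but goes beyond what the paper includes, as it defers that issue entirely to the cited Lyapunov result.
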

\begin{proof}
    The derivative of $V$ along the closed-loop system is upper bounded as
    \begin{equation*}
        \begin{aligned}
            \dot{V}(x) = &  L_fV(x) + L_gV(x)k(x) + L_{\varphi}V(x,k(x))\theta \\
            \leq &  L_fV(x) + L_gV(x)k(x) + \sup_{\theta\in\Theta}L_{\varphi}V(x,k(x))\theta\\
            \leq &  -\gamma(V(x)),
        \end{aligned}
    \end{equation*}
    and asymptotic stability follows from \cite[Thm. 4.1]{Khalil}.
\end{proof}

Following the same duality-based approach as in the previous section we can make the synthesis of robust stabilizing controllers more tractable than as presented in Def. \ref{def:RCLF}. The dual of the LP $\sup_{\theta\in\Theta}L_\varphi V(x,u)\theta$ is given by
\begin{equation}
    \begin{aligned}
        \inf_{\lambda\geq 0}\quad & b\T\lambda  \\
        \subjectto\quad & \lambda\T A = L_\varphi V(x,u),
    \end{aligned}
\end{equation}
where $\lambda$ is the dual variable. This allows to generate inputs satisfying condition \eqref{eq:RCLF} by solving the following QP:
\begin{equation}\label{eq:RCLF-QP}
    \begin{aligned}
        \min_{u\in\mc{U},\,\lambda\geq0} & \quad \tfrac{1}{2}\|u\|^2 \\
        \subjectto & \quad L_fV(x) + L_gV(x)u + b\T\lambda\leq - \gamma(V(x)) \\
        & \quad \lambda\T A = L_{\varphi}V(x,u),
    \end{aligned}
\end{equation}
as shown in the following theorem.

\begin{theorem}\label{theorem:RCLF}
    Let the assumptions of Lemma \ref{lemma:RCLF} hold.
    Then, any locally Lipschitz solution to \eqref{eq:RCLF-QP}, $u=k(x)$, renders the origin asymptotically stable for the closed-loop system.
\end{theorem}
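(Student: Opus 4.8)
The plan is to mirror the argument of Theorem~\ref{theorem:RCBF}, replacing the max--min structure with the min--max structure of the RCLF condition. First I would observe that, for a fixed $x\in\mathcal{D}$, condition \eqref{eq:RCLF} holds if and only if the optimal value of
\begin{equation*}
    \begin{aligned}
        \inf_{u\in\R^m}\sup_{\theta\in\R^{p+m}} & \quad L_fV(x) + L_gV(x)u + L_{\varphi}V(x,u)\theta \\
        \subjectto & \quad u\in\mathcal{U},\;A\theta \leq b,
    \end{aligned}
\end{equation*}
is less than or equal to $-\gamma(V(x))$. Since $L_{\varphi}V(x,u)$ is affine in $u$, for each fixed $u$ the inner maximization over $\theta$ is a linear program; by Assumption~\ref{assumption:theta} its feasible set $\Theta$ is a nonempty bounded hyperrectangle, so the LP is feasible and bounded, and strong duality \cite[Thm.~4.4]{BertsimasTsitsiklis} applies. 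Hence the inner $\sup_\theta$ may be replaced by the dual $\inf_{\lambda\geq0}\{b\T\lambda : \lambda\T A = L_{\varphi}V(x,u)\}$ without changing the value.

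Carrying out this substitution, the combined problem becomes
\begin{equation*}
    \begin{aligned}
        \inf_{u\in\R^m,\,\lambda\geq0} & \quad L_fV(x) + L_gV(x)u + b\T\lambda \\
        \subjectto & \quad u\in\mathcal{U},\;\lambda\T A = L_{\varphi}V(x,u),
    \end{aligned}
\end{equation*}
where the two nested infima over $u$ and $\lambda$ collapse into a single joint infimum over $(u,\lambda)$. I would then argue that imposing the inequality "objective $\leq -\gamma(V(x))$" together with the equality $\lambda\T A = L_{\varphi}V(x,u)$ and the sign constraints as constraints in an optimization problem (with the regularizing cost $\tfrac12\|u\|^2$) yields exactly the QP \eqref{eq:RCLF-QP}. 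The hypothesis that $K_{rclf}(x)$ is nonempty for each $x\in\mathcal{D}$ means the optimal value of the primal min--max problem is $\leq -\gamma(V(x))$, and by strong duality so is the optimal value of the dual-reformulated problem; therefore there exists $(u,\lambda)$ satisfying all the constraints of \eqref{eq:RCLF-QP}, i.e.\ the QP is feasible at every $x\in\mathcal{D}$.

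Finally, I would show that any feasible point $(k(x),\lambda)$ of \eqref{eq:RCLF-QP} satisfies $L_fV(x) + L_gV(x)k(x) + b\T\lambda \leq -\gamma(V(x))$ with $\lambda\T A = L_{\varphi}V(x,k(x))$ and $\lambda\geq0$; weak duality (the easy direction, $b\T\lambda \geq \sup_{\theta\in\Theta}L_\varphi V(x,k(x))\theta$ for any dual-feasible $\lambda$) then gives $L_fV(x) + L_gV(x)k(x) + \sup_{\theta\in\Theta}L_{\varphi}V(x,k(x))\theta \leq -\gamma(V(x))$, so $k(x)\in K_{rclf}(x)$. Invoking the assumed local Lipschitzness of the solution map $u=k(x)$ and Lemma~\ref{lemma:RCLF} then delivers asymptotic stability of the origin for the closed-loop system, as desired.

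I do not anticipate a genuine obstacle here: the only point requiring care is verifying the applicability of LP strong duality (feasibility and boundedness of the inner LP), which is immediate from Assumption~\ref{assumption:theta} since $\Theta$ is a nonempty compact hyperrectangle. The remainder is a transcription of the reasoning behind Theorem~\ref{theorem:RCBF} with inequalities reversed.
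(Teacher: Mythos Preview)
Your proposal is correct and follows essentially the same approach as the paper, whose own proof of Theorem~\ref{theorem:RCLF} simply reads ``Follows the same steps as that of Theorem~\ref{theorem:RCBF}.'' Your explicit invocation of weak duality to show that \emph{any} feasible point of \eqref{eq:RCLF-QP} (not merely the optimizer) lies in $K_{rclf}(x)$ is a slightly more careful statement than what the paper spells out for the RCBF case, but it is entirely in the same spirit.
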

\begin{proof}
    Follows the same steps as that of Theorem \ref{theorem:RCBF}. 
\end{proof}
Provided the sufficient conditions of Theorems \ref{theorem:RCBF} and \ref{theorem:RCLF} are satisfied, inputs enforcing stability and safety can be computed for each $x\in\mathcal{C}$ by taking the solution\footnote{It is also possible to embed both RCBF and RCLF constraints in a single QP; however, we find that in practice better performance is achieved by filtering the RCLF policy through the RCBF QP.} to \eqref{eq:RCLF-QP} as $k_d$ in \eqref{eq:RCBF-QP}.

\section{Online Learning for Uncertainty Reduction}\label{sec:smid}
The previous section demonstrates how to robustly account for system uncertainty to guarantee stability and/or safety; however,  the initial bounds on the system uncertainty may be highly conservative, which could restrict the system from exploring much of the safe set and, as illustrated in Sec. \ref{sec:sims}, could produce controllers that require large amounts of control effort to enforce stability and safety. A more attractive approach is to leverage input-output data generated by the system at run-time in an effort to identify the system uncertainty, which can be used to reduce the conservatism of the approach outlined in the previous section. To this end, we present an integral variant of the SMID algorithm \cite{BoydTAC92} commonly employed in the model predictive control (MPC) literature  \cite{MorariAutomatica14,Lopez} (and recently used in the CBF literature \cite{LopezLCSS21}) to construct the set of possible system parameters that are consistent with the input-output data observed at run time. Since MPC methods typically operate in discrete-time, classical SMID algorithms only require measurements of the system state. When such approaches are used in continuous-time \cite{LopezLCSS21}, such an approach requires measurements or numerical computations of state derivatives, which are generally unavailable or noisy, respectively. Taking inspiration from \cite{DixonIJACSP19}, we outline in this section a SMID algorithm for continuous-time systems that only requires knowledge of the system state and control input.

Following the approach from \cite{DixonIJACSP19}, let $\Delta t\in\R_{>0}$ be the length of an integration window and note that over any finite time interval $[t-\Delta t,t]\in\mathcal{I}$, the Fundamental Theorem of Calculus can be used to
represent \eqref{eq:dyn_lip} as
\begin{equation*}
    \begin{aligned}
        \underbrace{\int_{t-\Delta t}^t\dot{x}(s)ds}_{\Delta x(t)} = &  \underbrace{\int_{t-\Delta t}^{t}f(x(s))ds}_{\mathcal{F}(t)} + \underbrace{\int_{t-\Delta t}^{t}g(x(s))u(s)ds}_{\mathcal{G}(t)}  \\ &+ \underbrace{\int_{t-\Delta t}^{t}\varphi(x(s),u(s))ds}_{\mathcal{S}(t)}\theta.
    \end{aligned}
\end{equation*}
Our goal is now to use the relation
\begin{equation}\label{eq:dyn_int}
    \Delta x(t)=\mathcal{F}(t) + \mathcal{G}(t) + \mathcal{S}(t)\theta \quad \forall t\geq \Delta t,
\end{equation}
to shrink the set of possible parameters $\Theta$ using input-output data collected online.
To this end, let $\mc{H}(t)\coloneqq\curly{\Delta x_j(t),\,\mc{F}_j(t),\,\mc{G}_j(t),\,\mc{S}_j(t) }_{j=1}^{M(t)}$ be a time-varying \emph{history stack} with $M(t)\in\N$ entries, where $\Delta x_j(t)\coloneqq\Delta x(t_i)$, $\mc{F}_j(t)\coloneqq\mc{F}(t_i)$, $\mc{G}_j(t)\coloneqq\mc{G}(t_i)$, and $\mc{S}_j(t)\coloneqq\mc{S}(t_i)$ for some\footnote{The interpretation of the relation $\mc{F}_j(t)\coloneqq\mc{F}(t_i)$ is that $\mathcal{F}_j(t)$ is the value of $\mathcal{F}$ stored in the $j$th slot of the history stack at time $t$, which may have been recorded as some past time $t_i\leq t$.} $t_i\in[\Delta t,t]$. We allow for the number of entries in the history stack $M(t)$ to vary with time since the history stack may be initially empty and redundant data may be removed as new data becomes available \cite{MorariAutomatica14}, and denote by $\mathcal{M}(t)=\{1,\dots,M(t)\}$ the index set of data points at time $t$. Letting $\{t_k\}_{k\in\Z_{\geq0}}$ be a strictly increasing sequence of times with $t_0=0$, consider the corresponding sequence of sets
\begin{equation*}\label{eq:SMID}
    \begin{aligned}
        \Xi_0 = & \Theta \\
        \Xi_k = & \{\theta\in\Xi_{k-1}\,|\,-\varepsilon\mathbf{1}_n\leq \Delta x_j(t_k) - \mathcal{F}_j(t_k) - \mathcal{G}_j(t_k) \\ & - \mathcal{S}_j(t_k)\theta \leq \varepsilon\mathbf{1}_n,\:\forall j\in\mathcal{M}(t_k) \},
    \end{aligned}
\end{equation*}
which is the set of all parameters that approximately satisfy \eqref{eq:dyn_int} for each $j\in\mathcal{M}(t_k)$ with precision\footnote{The constant $\varepsilon$ can be seen as a parameter governing the conservativeness of the identification scheme, which can be used to account for disturbances, noise, unmodeled dynamics, and/or numerical integration errors.} $\varepsilon\in\R_{>0}$. In practice, the set $\Xi_k$ can be computed by solving, for each $i\in\{1,\dots,p+m\}$, the pair of LPs
\begin{equation}\label{eq:SMID_LP1}
    \begin{aligned}
        \underline{\theta}_i^k & =  \argmin_{\theta} \quad \theta _i \\
        \mathrm{s.t.} & \quad \Delta x_j(t_k) - \mathcal{F}_j(t_k) - \mathcal{G}_j(t_k) - \mathcal{S}_j(t_k)\theta \leq \varepsilon\mathbf{1}_n\, \forall j \\
        & \quad \Delta x_j(t_k) - \mathcal{F}_j(t_k) - \mathcal{G}_j(t_k) - \mathcal{S}_j(t_k)\theta \geq -\varepsilon\mathbf{1}_n\, \forall j \\
        & \quad A_{k-1}\theta \leq b_{k-1},
    \end{aligned}
\end{equation}
\begin{equation}\label{eq:SMID_LP2}
    \begin{aligned}
        \overline{\theta}_i^k & =\argmax_{\theta}  \quad \theta _i \\
        \mathrm{s.t.} & \quad \Delta x_j(t_k) - \mathcal{F}_j(t_k) - \mathcal{G}_j(t_k) - \mathcal{S}_j(t_k)\theta \leq \varepsilon\mathbf{1}_n\, \forall j \\
        & \quad \Delta x_j(t_k) - \mathcal{F}_j(t_k) - \mathcal{G}_j(t_k) - \mathcal{S}_j(t_k)\theta \geq -\varepsilon\mathbf{1}_n\, \forall j \\
        & \quad A_{k-1}\theta \leq b_{k-1},
    \end{aligned}
\end{equation}
where $\theta_i$ is the $i$th component of $\theta$ and $A_{k-1}$, $b_{k-1}$ capture the halfspace constraints imposed by $\Xi_{k-1}$. The updated set of possible parameters is then taken as
\begin{equation}\label{eq:Xi}
    \Xi_k=[\underline{\theta}_1^k, \overline{\theta}_1^k]\times\cdots\times [\underline{\theta}_{p+m}^k, \overline{\theta}_{p+m}^k].
\end{equation}

The following result shows that the true parameters always belong to the set of possible parameters generated by the integral SMID scheme.

\begin{lemma}\label{lemma:SMID}
    Provided that Assumptions \ref{assumption:lip}-\ref{assumption:theta} hold and the sequence of sets $\{\Xi_k\}_{k\in\Z_{\geq0}}$ is generated according to \eqref{eq:SMID_LP1}-\eqref{eq:Xi}, then $\Xi_{k}\subseteq \Xi_{k-1}\subseteq\Theta$ and $\theta\in \Xi_k$ for all $k\in\Z_{\geq0}$.
\end{lemma}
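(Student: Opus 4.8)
The plan is to prove both assertions by induction on $k$, treating the monotonicity of the sequence $\{\Xi_k\}$ first and then the containment of the true parameter $\theta$.

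For the nesting $\Xi_k\subseteq\Xi_{k-1}$ I would argue directly from the LP construction \eqref{eq:SMID_LP1}--\eqref{eq:Xi}. Each of the $2(p+m)$ linear programs defining a corner of $\Xi_k$ optimizes a single coordinate $\theta_i$ over a feasible set which, among its constraints, contains $A_{k-1}\theta\le b_{k-1}$; hence that feasible set is contained in $\Xi_{k-1}=[\underline{\theta}_1^{k-1},\overline{\theta}_1^{k-1}]\times\cdots\times[\underline{\theta}_{p+m}^{k-1},\overline{\theta}_{p+m}^{k-1}]$. Therefore the minimum of $\theta_i$ over this set is no smaller than $\underline{\theta}_i^{k-1}$ and its maximum is no larger than $\overline{\theta}_i^{k-1}$, so $[\underline{\theta}_i^k,\overline{\theta}_i^k]\subseteq[\underline{\theta}_i^{k-1},\overline{\theta}_i^{k-1}]$ for every $i$, which gives $\Xi_k\subseteq\Xi_{k-1}$. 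Iterating down to $\Xi_0=\Theta$ yields $\Xi_k\subseteq\Theta$ for all $k$; the well-posedness of each LP (feasibility and boundedness of the optimum) is inherited from $\Theta$ together with the fact, established next, that the true $\theta$ always lies in the feasible set.

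For $\theta\in\Xi_k$ I would also induct. The base case $\theta\in\Xi_0=\Theta$ is Assumption \ref{assumption:theta}. For the inductive step, assume $\theta\in\Xi_{k-1}$ and show that the true parameter is feasible for each LP \eqref{eq:SMID_LP1}--\eqref{eq:SMID_LP2} at step $k$. The constraint $A_{k-1}\theta\le b_{k-1}$ holds by the inductive hypothesis. For the data constraints, observe that every stored tuple $(\Delta x_j(t_k),\mc{F}_j(t_k),\mc{G}_j(t_k),\mc{S}_j(t_k))$ equals $(\Delta x(t_i),\mc{F}(t_i),\mc{G}(t_i),\mc{S}(t_i))$ for some $t_i\in[\Delta t,t_k]\subseteq\mathcal{I}$ evaluated along the actual closed-loop trajectory, and along that trajectory integrating \eqref{eq:dyn_lip} over $[t_i-\Delta t,t_i]$ via the Fundamental Theorem of Calculus gives exactly \eqref{eq:dyn_int}, i.e. $\Delta x(t_i)-\mc{F}(t_i)-\mc{G}(t_i)-\mc{S}(t_i)\theta=0$. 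Since $0\in[-\varepsilon\mathbf{1}_n,\varepsilon\mathbf{1}_n]$ for $\varepsilon>0$, the true $\theta$ satisfies all data constraints, hence lies in the feasible set of both \eqref{eq:SMID_LP1} and \eqref{eq:SMID_LP2} for every $i$. Consequently $\underline{\theta}_i^k\le\theta_i\le\overline{\theta}_i^k$ for all $i$, i.e. $\theta\in\Xi_k$ as defined in \eqref{eq:Xi}, completing the induction.

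There is no substantial obstacle here; the argument is essentially bookkeeping once the right identities are lined up. The one point requiring care is making \eqref{eq:dyn_int} available for each stored entry: one must note that the history stack only ever contains quantities computed from the realized state/input signals on sub-intervals of the maximal interval of existence, so \eqref{eq:dyn_int} — an exact consequence of integrating \eqref{eq:dyn_lip} — holds verbatim for each stored tuple, with zero residual for the true $\theta$. The strict positivity $\varepsilon>0$ plays no role beyond enlarging the feasible sets (to absorb measurement noise and quadrature error in practice), which can only help the containment $\theta\in\Xi_k$.
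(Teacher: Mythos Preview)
Your proposal is correct and follows essentially the same approach as the paper: both argue nesting from the constraint $A_{k-1}\theta\le b_{k-1}$ forcing $[\underline{\theta}_i^{k},\overline{\theta}_i^{k}]\subseteq[\underline{\theta}_i^{k-1},\overline{\theta}_i^{k-1}]$, and both prove $\theta\in\Xi_k$ by induction, using \eqref{eq:dyn_int} to show the true parameter has zero residual (hence satisfies the $\varepsilon$-relaxed data constraints) and the inductive hypothesis to handle $A_{k-1}\theta\le b_{k-1}$. The paper phrases the second part via auxiliary sets $H_k,\,H_k^\pm$, but this is cosmetic.
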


\begin{proof}
    The observation that $\Xi_{k}\subseteq \Xi_{k-1}$ for all $k\in\Z_{\geq0}$ follows directly from \eqref{eq:SMID_LP1} and \eqref{eq:SMID_LP2} since the constraint $A_{k-1}\theta\leq b_{k-1}$ ensures that $\underline{\theta}_i^k, \overline{\theta}_i^k\in[\underline{\theta}_i^{k-1}, \overline{\theta}_i^{k-1}]$ for all $i$ implying $[\underline{\theta}_i^{k}, \overline{\theta}_i^{k}] \subseteq[\underline{\theta}_i^{k-1}, \overline{\theta}_i^{k-1}]$ for all $i$. It then follows from \eqref{eq:Xi} and $\Xi_0=\Theta$ that $\Xi_{k}\subseteq \Xi_{k-1}\subseteq\Theta$ for all $k\in\Z_{\geq0}$. Our goal is now to show that $\theta\in\Xi_{k-1}\implies \theta\in\Xi_{k}$. For any $k\in\Z_{\geq0}$, relation \eqref{eq:dyn_int}
    %\rtron{shouldn't be (15)?}
    implies that $\theta$ belongs to the set
    \[
    H_k=\{\theta\in\R^{p+m}\,|\,\Delta x_j(t_k) - \mathcal{F}_j(t_k) - \mathcal{G}_k(t_k) - \mathcal{S}_j(t_k)\theta=0\}
    \]
    for all $j\in\mathcal{M}(t_k)$. Additionally, for any $k\in\Z_{\geq0}$ the constraints in \eqref{eq:SMID_LP1}-\eqref{eq:SMID_LP2} ensure that $\Xi_k\subset H_{k}^-\cap H_k^+$, where
    \begin{equation*}
        \begin{aligned}
            H_k^- = & \{\theta\in\R^{p+m}\,|\,\Delta x_j(t_k) - \mathcal{F}_j(t_k) - \mathcal{G}_k(t_k)  \\ & - \mathcal{S}_j(t_k)\theta \geq  -\varepsilon\mathbf{1}_n\} \\
            H_k^+ = & \{\theta\in\R^{p+m}\,|\,\Delta x_j(t_k) - \mathcal{F}_j(t_k) - \mathcal{G}_k(t_k) \\ & - \mathcal{S}_j(t_k)\theta\leq \varepsilon\mathbf{1}_n\},
        \end{aligned}
    \end{equation*}
    for all $j\in\mathcal{M}(t_k)$. It then follows from $\theta\in H_k$ and $H_k\subset H_{k}^-\cap H_k^+$
    that $\theta\in H_{k}^-\cap H_k^+$. The last constraint in \eqref{eq:SMID_LP1}-\eqref{eq:SMID_LP2} ensures that $\Xi_k\subset H_{k}^-\cap H_k^+\cap \Xi_{k-1}$, which implies that $\theta\in\Xi_k$ as long as $\theta\in\Xi_{k-1}$. Since $\theta\in\Xi_0$ it inductively follows from $\theta\in\Xi_{k-1}\implies \theta\in\Xi_{k}$ for all $k\in\Z_{\geq0}$ that $\theta\in \Xi_k$ for all $k\in\Z_{\geq0}$.
\end{proof}

The following propositions demonstrate that if $h$ and $V$ are a RCBF and RCLF, respectively, for \eqref{eq:dyn_lip} with respect to the original parameter set $\Theta$, then they remain so for the parameter sets generated by the SMID algorithm.
\begin{proposition}\label{proposition:RCBF_SMID}
    Let $h$ be a RCBF for \eqref{eq:dyn_lip} on a set $\mathcal{C}\subset\R^n$ in the sense that there exists an extended class $\mathcal{K}$ function $\alpha$ such that \eqref{eq:RCBF} holds for all $x\in\mathcal{C}$. Provided the assumptions of Lemma \ref{lemma:SMID} hold, then
    \begin{equation*}
        \sup_{u\in\mathcal{U}}\inf_{\theta\in\Xi_k}\dot{h}(x,u,\theta) \geq -\alpha(h(x)),
    \end{equation*}
    for all $x\in\mathcal{C}$ and all $k\in\Z_{\geq0}$.
\end{proposition}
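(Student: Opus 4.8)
The plan is to exploit the fact that the SMID algorithm only ever \emph{shrinks} the parameter set, so that the inner worst-case over $\Xi_k$ is no worse than the inner worst-case over the original set $\Theta$. Concretely, the first step is to invoke Lemma \ref{lemma:SMID}, which guarantees that $\Xi_k \subseteq \Theta$ for all $k \in \Z_{\geq 0}$ and, moreover, that $\theta \in \Xi_k$, so that $\Xi_k$ is nonempty and the infimum $\inf_{\theta \in \Xi_k} \dot h(x,u,\theta)$ is well defined for every $x$ and $u$.

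The second step is the key monotonicity observation: for any fixed $x \in \mathcal{C}$ and any fixed $u \in \mathcal{U}$, taking an infimum of $\dot h(x,u,\cdot)$ over the smaller set $\Xi_k$ can only increase its value relative to taking the infimum over the larger set $\Theta$, i.e.
\begin{equation*}
    \inf_{\theta \in \Xi_k} \dot h(x,u,\theta) \;\geq\; \inf_{\theta \in \Theta} \dot h(x,u,\theta).
\end{equation*}
The third step is to take the supremum over $u \in \mathcal{U}$ on both sides; since this is a monotone operation, the inequality is preserved, yielding
\begin{equation*}
    \sup_{u \in \mathcal{U}} \inf_{\theta \in \Xi_k} \dot h(x,u,\theta) \;\geq\; \sup_{u \in \mathcal{U}} \inf_{\theta \in \Theta} \dot h(x,u,\theta) \;\geq\; -\alpha(h(x)),
\end{equation*}
where the last inequality is exactly the RCBF condition \eqref{eq:RCBF} assumed to hold for all $x \in \mathcal{C}$. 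Since $x \in \mathcal{C}$ and $k \in \Z_{\geq 0}$ were arbitrary, this completes the argument.

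I do not anticipate any genuine obstacle here: the entire content is the set-inclusion $\Xi_k \subseteq \Theta$ from Lemma \ref{lemma:SMID} together with the elementary fact that infimum is anti-monotone and supremum is monotone with respect to set inclusion. The only point worth stating carefully is that $\Xi_k$ is nonempty, which is needed so that the inner infimum is not vacuously $+\infty$; this again follows from $\theta \in \Xi_k$ in Lemma \ref{lemma:SMID}. The analogous statement for the RCLF (with $\sup$/$\inf$ and $\gamma$ swapped appropriately) would follow by the same monotonicity argument.
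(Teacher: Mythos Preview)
Your proposal is correct and rests on the same idea as the paper's proof: use $\Xi_k\subseteq\Theta$ from Lemma~\ref{lemma:SMID} to conclude that the inner infimum over $\Xi_k$ dominates the inner infimum over $\Theta$, then apply the RCBF hypothesis. The only difference is stylistic: the paper establishes the anti-monotonicity of $\inf_{\theta\in(\cdot)}L_\varphi h(x,u)\theta$ by a case analysis on whether the minimizer over $\Xi_k$ lies in $\Xi_{k+1}$ or in $\Xi_k\setminus\Xi_{k+1}$, whereas you invoke the elementary fact directly. Your version is cleaner and loses nothing; the paper's case split is not needed for the stated conclusion, and your explicit remark that $\theta\in\Xi_k$ guarantees nonemptiness is a point the paper leaves implicit.
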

\begin{proof}
    Let $\theta^*_k\in\Xi_k$ be the solution to the LP $\inf_{\theta\in\Xi_{k}}L_{\varphi}h(x,u)\theta$
    %\rtronhl{you mean that $\theta^*$ is a function of $x,u$?}{for any given pair $(x,u)\in\mathcal{C}\times\mathcal{U}$}. 
    for some fixed $(x,u)$. Since $\Xi_{k+1}\subseteq\Xi_k$ by Lemma \ref{lemma:SMID} one of the following holds: either (i) $\theta^*_k\in\Xi_{k+1}$ or (ii) $\theta^*_k\in\Xi_{k}\backslash\Xi_{k+1}$. For case (i), if the infimum is achieved over the set $\Xi_{k+1}$, then $\theta^*_k$ would also be an optimal solution to the LP $\inf_{\theta\in\Xi_{k+1}}L_{\varphi}h(x,u)\theta$ and
    \begin{equation*}
        \inf_{\theta\in\Xi_{k+1}}L_{\varphi}h(x,u)\theta = \inf_{\theta\in\Xi_{k}}L_{\varphi}h(x,u)\theta.
    \end{equation*}
    For case (ii) if $\theta^*_k\in\Xi_{k}\backslash\Xi_{k+1}$, then necessarily
    \begin{equation*}
        \inf_{\theta\in\Xi_{k+1}}L_{\varphi}h(x,u)\theta \geq \inf_{\theta\in\Xi_{k}}L_{\varphi}h(x,u)\theta,
    \end{equation*}
    otherwise the infimum would have been achieved over $\Xi_{k+1}$ since $\Xi_k\supseteq \Xi_{k+1}$. Thus, since the RCBF condition \eqref{eq:RCBF} holds over $\Theta$ and $\Xi_{k}\subseteq\Theta$ for all $k\in\Z_{\geq0}$ by Lemma \ref{lemma:SMID}, we have
    \begin{equation*}
        \inf_{\theta\in\Xi_{k}}L_{\varphi}h(x,u)\theta \geq \inf_{\theta\in\Theta}L_{\varphi}h(x,u)\theta,
    \end{equation*}
    for all $k\in\Z_{\geq0}$. The preceding argument implies
    \begin{equation*}
        \begin{aligned}
            \sup_{u\in\mathcal{U}}\inf_{\theta\in\Xi_k}\dot{h}(x,u,\theta) \geq \sup_{u\in\mathcal{U}}\inf_{\theta\in\Theta}\dot{h}(x,u,\theta)  \geq -\alpha(h(x)),
        \end{aligned}
    \end{equation*}
    for all $x\in\mathcal{C}$ and $k\in\Z_{\geq0}$, as desired.
\end{proof}

\begin{proposition}\label{proposition:RCLF_SMID}
    Let $V$ be a RCLF for \eqref{eq:dyn_lip} on a set $\mathcal{D}\subseteq\R^n$ in the sense that there exists a class $\mathcal{K}$ function $\gamma$ such that \eqref{eq:RCLF} holds for all $x\in\mathcal{D}$. Provided the assumptions of Lemma \ref{lemma:SMID} hold, then
    \begin{equation*}
        \inf_{u\in\mathcal{U}}\sup_{\theta\in\Xi_k}\dot{V}(x,u,\theta) \leq -\gamma(V(x)),
    \end{equation*}
    for all $x\in\mathcal{D}$ and all $k\in\Z_{\geq0}$.
\end{proposition}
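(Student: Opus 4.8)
The plan is to mirror the proof of Proposition~\ref{proposition:RCBF_SMID}, with the inequalities reversed to match the $\inf_u\sup_\theta$ structure of the RCLF condition. The engine of the argument is the nesting $\Xi_k\subseteq\Theta$ supplied by Lemma~\ref{lemma:SMID}: shrinking the parameter set can only lower, or leave unchanged, the worst-case value of the linear objective $L_\varphi V(x,u)\theta$ over that set.

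First I would fix an arbitrary $x\in\mathcal{D}$, $u\in\mathcal{U}$, and $k\in\Z_{\geq0}$. Since $\Xi_k$ is a nonempty compact polytope (it contains $\theta$ by Lemma~\ref{lemma:SMID}) and $L_\varphi V(x,u)\theta$ is linear in $\theta$, the LP $\sup_{\theta\in\Xi_k}L_\varphi V(x,u)\theta$ attains a finite maximizer; because $\Xi_k\subseteq\Theta$, every feasible point of this LP is feasible for the corresponding LP over $\Theta$, so
\[
\sup_{\theta\in\Xi_k}L_\varphi V(x,u)\theta \;\leq\; \sup_{\theta\in\Theta}L_\varphi V(x,u)\theta .
\]
(Equivalently, one may run the case split from Proposition~\ref{proposition:RCBF_SMID} on the maximizer over $\Xi_k$ — either it lies in $\Xi_{k+1}$, forcing equality of the two suprema, or it does not, forcing the supremum over the smaller set to be no larger — and telescope down to $\Xi_0=\Theta$.) Adding $L_fV(x)+L_gV(x)u$ to both sides and taking $\inf_{u\in\mathcal{U}}$ of each side preserves the inequality, yielding
\[
\inf_{u\in\mathcal{U}}\sup_{\theta\in\Xi_k}\dot V(x,u,\theta) \;\leq\; \inf_{u\in\mathcal{U}}\sup_{\theta\in\Theta}\dot V(x,u,\theta) \;\leq\; -\gamma(V(x)),
\]
where the last inequality is the RCLF condition \eqref{eq:RCLF} for $V$ on $\mathcal{D}$ relative to $\Theta$. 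Since $x\in\mathcal{D}$ and $k$ were arbitrary, this is the claim.

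I do not anticipate a genuine obstacle. The one point requiring care is the orientation of the monotonicity: for RCBFs one bounds an \emph{infimum from below}, whereas here one bounds a \emph{supremum from above}, so the inclusion $\Xi_k\subseteq\Theta$ must be used in the opposite direction — a smaller feasible set shrinks a $\max$. The only hypothesis that needs checking, namely nonemptiness of $\Xi_k$ so that the inner supremum is well-defined and finite, is exactly what Lemma~\ref{lemma:SMID} provides through $\theta\in\Xi_k$.
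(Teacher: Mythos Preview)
Your proposal is correct and is exactly the approach the paper intends: the paper's own proof is simply ``The proof parallels that of Proposition~\ref{proposition:RCBF_SMID},'' and you have carried out precisely that parallel, reversing the inequalities and using the inclusion $\Xi_k\subseteq\Theta$ from Lemma~\ref{lemma:SMID} to bound the inner supremum from above.
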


\begin{proof}
    The proof parallels that of Proposition \ref{proposition:RCBF_SMID}.
\end{proof}

\begin{remark}\label{remark:switched}
    Each uncertainty set generated by the SMID algorithm induces a different control policy and hence a different closed-loop system. Thus, as the uncertainty set is updated over time, the original system \eqref{eq:dyn_lip} becomes a switched system with switching instances taking place whenever the uncertainty set is updated. Since the derivative of the RCLF along each subsystem is bounded by the same negative definite term, the RCLF serves as a \emph{common Lyapunov function} \cite[Ch. 2]{Liberzon-switched-systems}, thereby preserving stability under arbitrary switching.
\end{remark}

\section{Numerical Examples}\label{sec:sims}
\paragraph*{Nonlinear System}
We first consider the scenario from \cite{JankovicAutomatica18,CohenAutomatica21}, which involves a two-dimensional system of the form \eqref{eq:dyn} with $f(x)=[\theta_1x_1 + \theta_2x_2\;\theta_3x_1^3]\T\in\R^2$ and $g(x)=[0\;\theta_4x_2]\T\in\R^2$,
where $\theta_1=-0.6$, $\theta_2=-1$, $\theta_3=1$, $\theta_4=1$ are the uncertain parameters. This system can be recast in the form of \eqref{eq:dyn_lip} by defining $f(x)=[0\;0]\T$, $g(x)=[0\;0]\T$
\begin{equation*}
    F(x) =
    \begin{bmatrix}
        x_1 & x_2 & 0 \\ 0 & 0 & x_1^3
    \end{bmatrix},
    \quad
    G(x) =
    \begin{bmatrix}
        0 \\ x_2
    \end{bmatrix},
\end{equation*}
with $\theta_f=[\theta_1\;\theta_2\;\theta_3]\T$ and $\theta_g=\theta_4$. The uncertain parameters are assumed to lie in the set $\Theta=[-1.2,-0.2]\times[-2,-0.1]\times[0.5,1.4]\times[0.8,1.2].$
The objective is to regulate the system to the origin while remaining in a set $\mathcal{C}\subset\R^2$ characterized as in \eqref{eq:C} with $h(x) = 1 - x_1 - x_2^2$. The regulation objective is achieved by considering the RCLF candidate $V(x) = \tfrac{1}{4}x_1^4 + \tfrac{1}{2}x_2^2$ with $\gamma(s)=\tfrac{1}{2}s$ and the safety objective is achieved by considering the RCBF candidate with $h$ as above and $\alpha(s)=s^3$. Given a RCLF, RCBF, and uncertainty set $\Theta$, one can form a QP as noted after Theorem \ref{theorem:RCLF} to generate a closed-loop control policy that guarantees stability and safety provided the sufficient conditions of Theorems \ref{theorem:RCBF} and \ref{theorem:RCLF} are satisfied. To illustrate the impact of the integral SMID procedure, simulations are run with and without SMID active, the results of which are provided in Fig. \ref{fig:nonlinear_traj}-\ref{fig:nonlinear_SMID}. The parameters associated with the SMID simulation are $\Delta t=0.3$, $\varepsilon=0.1$, $M=20$. The $M$ data points in LPs \eqref{eq:SMID_LP1} and \eqref{eq:SMID_LP2} are collected using a moving window approach, where the $M$ most recent data points are used to update the uncertainty set. As illustrated in Fig. \ref{fig:nonlinear_traj} the trajectory under the RCLF-RCBF-QP achieves the stabilization and safety objective with and without SMID; however, the trajectory without any parameter identification is significantly more conservative and is unable to approach the boundary of the safe set. In contrast, the trajectory with SMID is able to approach the boundary of the safe set as more data about the system becomes available. In particular, both trajectories follow an identical path up until $t=\Delta t$, at which point the set of possible parameters is updated, causing the blue curve (SMID) to deviate from the orange curve (no SMID) in Fig. \ref{fig:nonlinear_traj}. In fact, even after the first SMID update the blue curve closely resembles the purple curve, which corresponds to the trajectory under a CBF-QP with perfect model knowledge. Although the parameters have not been exactly identified by the end of the simulation (see Fig. \ref{fig:nonlinear_SMID}), the modest reduction in uncertainty offered by the SMID approach greatly reduces the conservatism of the purely robust approach.

\begin{figure}
    \centering
    \vspace{3mm}
    \includegraphics[]{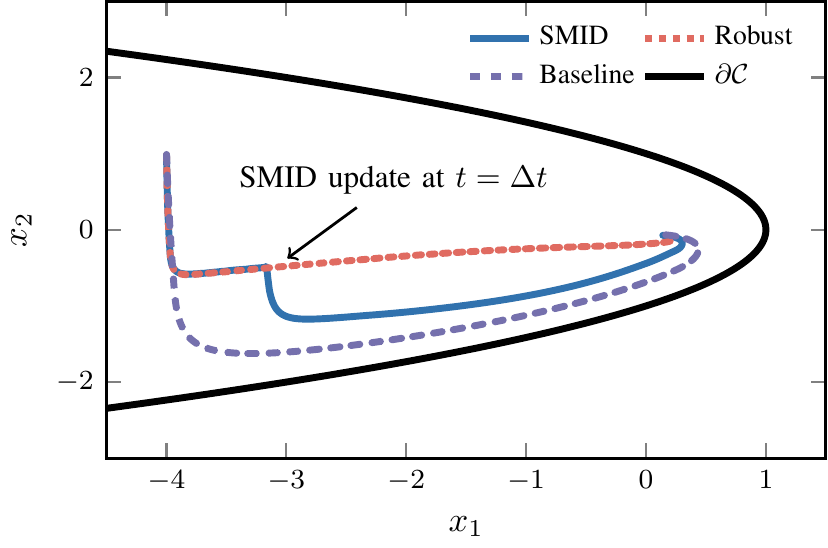}
    \vspace{-4mm}
    \caption{Trajectory of the nonlinear system under various controllers. The solid blue curve depicts the trajectory with SMID, the dotted orange curve depicts the trajectory without SMID, the purple curve illustrates the trajectory under a standard CBF-QP with exact model knowledge, and the black curve denotes the boundary of the safe set.\vspace{-4mm}}
    \label{fig:nonlinear_traj}
\end{figure}

\begin{figure*}
    \centering
    \vspace{3mm}
    \includegraphics[]{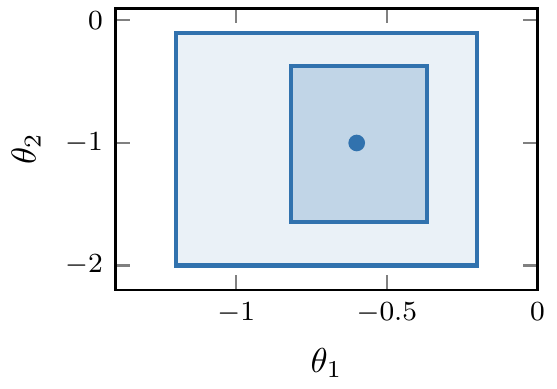}
    \hfill
    \includegraphics[]{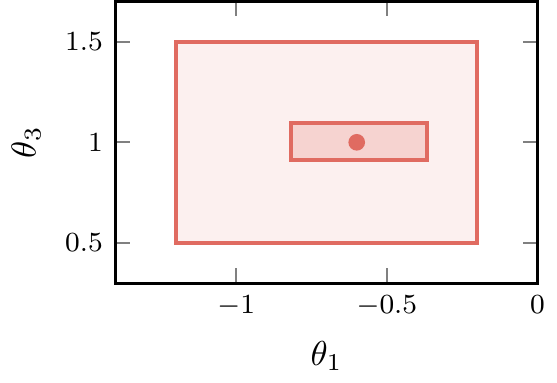}
    \hfill
    \includegraphics[]{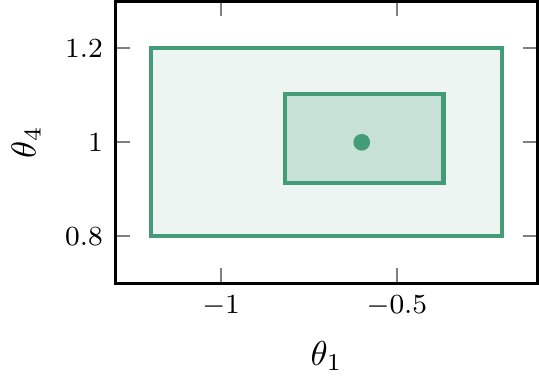}
    \vspace{-4mm}
    \caption{Set-based estimate of the uncertain parameters for the nonlinear system. From left to right, the plots illustrate the uncertainty set $\Theta$ projected onto the $\theta_1\times \theta_2$, $\theta_1\times \theta_3$, and $\theta_1\times \theta_4$ axes, respectively. In each plot the pale rectangle represents the original uncertainty set, the dark rectangle represents the final uncertainty set generated by the SMID algorithm, and the dot represents the true values of the parameters.\vspace{-4mm}}
    \label{fig:nonlinear_SMID}
\end{figure*}

\paragraph*{Robotic Navigation}\label{sec:hocbf-sim}
We now consider a robotic navigation task as in \cite{CohenACC22} and demonstrate how to incorporate high order CBFs (HOCBFs) \cite{WeiTAC21-hocbf} into the developed framework. The robot is modeled as a planar double integrator with uncertain mass and friction effects of the form \eqref{eq:dyn} as
\begin{equation*}
    \underbrace{
    \begin{bmatrix}
        \dot{x}_1 \\ \dot{x}_2 \\ \dot{x}_3 \\ \dot{x}_4
    \end{bmatrix}
    }_{\dot{x}}
    =
    \underbrace{
    \begin{bmatrix}
        x_3 \\ x_4 \\ -\tfrac{\mathrm{c}_1}{\mathrm{m}} x_3 \\ -\tfrac{\mathrm{c}_2}{\mathrm{m}}x_4
    \end{bmatrix}
    }_{f(x)}
    +
    \underbrace{
    \begin{bmatrix}
        0 & 0 \\ 0 & 0 \\ \tfrac{1}{\mathrm{m}} & 0 \\ 0 & \tfrac{1}{\mathrm{m}}
    \end{bmatrix}
    }_{g(x)}
    \underbrace{
    \begin{bmatrix}
        u_1 \\ u_2
    \end{bmatrix}
    }_{u}
\end{equation*}
where $[x_1\;x_2]\T\in\R^2$ represents the robot's position, $[x_3\;x_4]\T\in\R^2$ its velocity, $u\in\R^2$ its acceleration input, $[\mathrm{c}_1\;\mathrm{c}_2]\T\in\R^2$ are uncertain friction coefficients, and $\mathrm{m}\in\R_{>0}$ is an uncertain mass. This system can be represented as in \eqref{eq:dyn_lip} by defining $f(x)=[x_3\;x_4\;0\;0]\T$, $g(x)=0_{4\times 2}$, $\theta_f=[\tfrac{\mathrm{c}_1}{\mathrm{m}}\;\tfrac{\mathrm{c}_2}{\mathrm{m}}]\T$, $\theta_g=[\tfrac{1}{\mathrm{m}}\;\tfrac{1}{\mathrm{m}}]\T$, and
\begin{equation*}
    F(x) =
    \begin{bmatrix}
        0 & 0 \\ 0 & 0 \\ -x_3 & 0 \\ 0 & -x_4
    \end{bmatrix},
    \quad
    G(x) =
    \begin{bmatrix}
        0 & 0 \\ 0 & 0 \\ 1 & 0 \\ 0 & 1
    \end{bmatrix}.
\end{equation*}
The objective is to drive the robot to the origin while avoiding a circular obstacle of radius $r\in\R_{>0}$ centered at $[x_o\;y_o]\T\in\R^2$. The candidate safe set can be described as the zero superlevel set of $h(x) = (x_1 - x_o)^2 + (x_2 - y_o)^2 - r^2.$ However, note that $\nabla h(x)=[2x_1\;2x_2\;0\;0]$ and thus $L_gh(x),\,L_Fh(x),\,L_Gh(x)\equiv 0$, which implies that the relative degree \cite[Def. 13.2]{Khalil} of $h$ with respect to $u$ is larger than one and $h$ is not a CBF for this particular system. One way to overcome this difficulty is to leverage HOCBFs \cite{WeiTAC21-hocbf}, which employ a backstepping-like methodology to systematically inject higher order terms into a CBF candidate. As noted in Remark \ref{remark:hocbf}, HOCBFs can be leveraged for the uncertain system \eqref{eq:dyn_lip} provided the uncertain terms satisfy the conditions posed in \cite[Assumption 1]{CohenACC22}, which requires the relative degree of $h$ with respect to the uncertain parameters to be the same as that of the control input. By computing the second derivative of $h$ along the system dynamics one can verify such an assumption holds for this system and candidate safe set.

To further demonstrate the advantage of reducing the level of uncertainty online, we simulate the double integrator under a robust HOCBF-based policy with and without the integral SMID algorithm running. For each simulation, the uncertain parameters are assumed to lie in the set $\Theta=[0,5]\times[0,5]\times[0.1,2]\times[0.1,2]$ and all extended class $\mathcal{K}$ functions used in the HOCBF constraints are chosen as $\alpha(s)=s^3$ (see \cite{CohenACC22,WeiTAC21-hocbf} for further details on the formulation of HOCBF constraints). The stabilization objective is achieved by considering the same CLF candidate used in \cite{CohenACC22} and the controller ultimately applied to the system is computed by filtering the solution to the RCLF-QP \eqref{eq:RCLF-QP} through a robust HOCBF-QP. The parameters for the SMID algorithm are chosen as $M=20$, $\Delta t=0.1$, and $\varepsilon=1$, where data is recorded using the same technique as in the previous example. The trajectory of the robot's position with and without the SMID algorithm is illustrated in Fig. \ref{fig:dbl_int_traj}, where each trajectory is shown to satisfy the stability and safety objective. Although the trajectories appear very similar, the controller without SMID generates this trajectory with significantly more control effort (see Fig. \ref{fig:double_int_control}). In fact, within the first second of the simulation such a controller requires control effort that is an order of magnitude higher than that of the controller that reduces the uncertainty online to avoid collision with the obstacle. 

\begin{figure}
    \centering
    \includegraphics[]{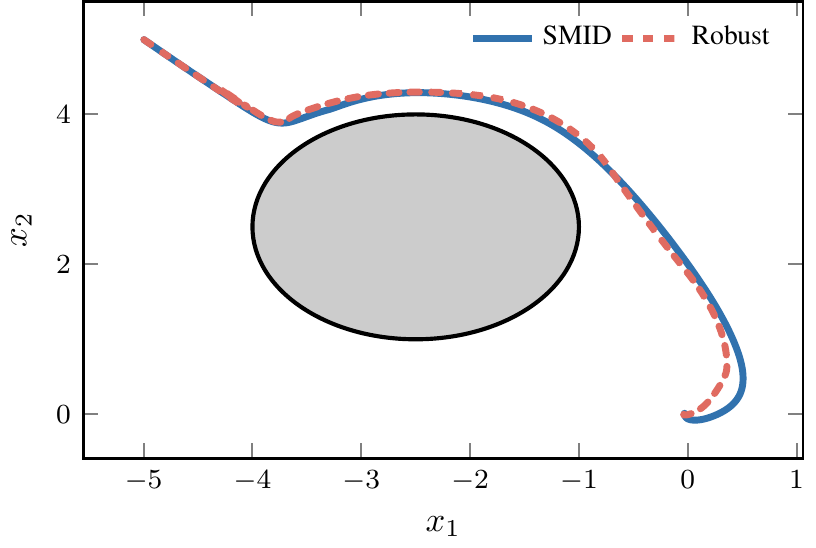}
    \vspace{-4mm}
    \caption{Evolution of the double integrator's position with the SMID algorithm active (blue) and inactive (orange). The gray disk denotes an obstacle of radius $r=1.5$ centered at $x_o=-2.5$, $y_o=2.5$.}
    \label{fig:dbl_int_traj}
\end{figure}

\begin{figure}
    \centering
    \includegraphics[]{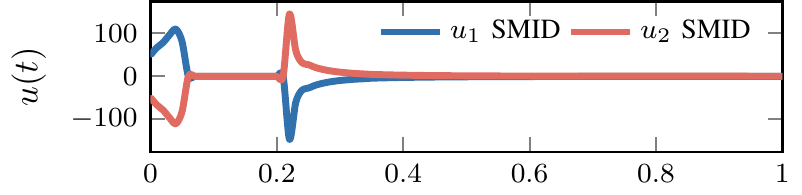}
    \includegraphics[]{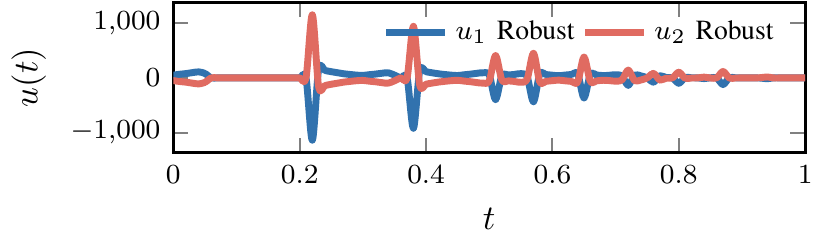}
    \vspace{-4mm}
    \caption{Evolution of the control input for the double integrator with the SMID algorithm active (top) and inactive (bottom). \vspace{-8mm}}
    \label{fig:double_int_control}
\end{figure}

\section{Conclusions}\label{sec:conclusion}
This paper introduced a methodology for robust stabilization and safety of nonlinear control systems in the presence of parametric uncertainty in both the drift and control vector fields. Crucial to this approach are a class of robust CBF and CLF that facilitate the computation of safe and stable control inputs using quadratic programming even when uncertain terms appear alongside the control input. The key insight enabling this approach was that the dual of an auxiliary LP can be used to convert bilinear constraints on the control and parameters into linear constraints that can be embedded within a QP. This robust approach was then combined with data-driven techniques in the form of a novel integral SMID algorithm that allows for the level of uncertainty to be reduced online while maintaining stability and safety guarantees. Potential directions for future research include an investigation into feasibility of the proposed QPs.

\bibliographystyle{ieeetr}
\bibliography{%
biblio/barrier,%
biblio/books,%
biblio/adaptive,%
biblio/hybrid,%
biblio/mpc,%
biblio/nonlinear%
}

\end{document}